\theoremstyle{plain}
\newtheorem{theorem}{Theorem}[section]
\newtheorem{lemma}[theorem]{Lemma}
\theoremstyle{definition}
\newtheorem{example}[theorem]{Example}
\theoremstyle{remark}
\newtheorem{remark}{Remark}
\newcommand\norm[1]{\left\lVert#1\right\rVert}
\newcounter{RomanNumber}
\begin{document}

\title{Nonuniqueness of Solutions of a Class of  $\ell_{0}$-minimization Problems}
\author{
\name{Jialiang Xu}
\thanks{CONTACT Jialiang Xu. Email: xujialiang@lsec.cc.ac.cn} 
\affil{Hua Loo-Keng Center for Mathematical Sciences, Academy of Mathematics and Systems Science, Chinese Academy of Sciences,  China}
}
%Academy of Mathematics and System Science, AMSS, Beijing, China
\maketitle

\begin{abstract} 
Recently, finding the sparsest solution of an underdetermined linear system has become an important request in many areas such as compressed sensing, image processing, statistical learning, and data sparse approximation. 
In this paper, we study some theoretical properties of the solutions to a general class of $\ell_{0}$-minimization problems, which can be used to deal with many practical applications. We establish some necessary conditions for a point being the sparsest solution to this class of problems, and we also characterize the conditions for the multiplicity of the sparsest solutions to the problem. Finally, we discuss certain conditions for the boundedness of the solution set of this class of problems.
\end{abstract}

\begin{keywords}  
 $\ell_{0}$-minimization; Sparsity; Nonuniqueness; Boundedness. 
\end{keywords}

\section{Introduction}
 Let $\left \| x \right \|_{0}$ denote the number of nonzero components of the vector $x$ in this paper. We consider the  following $\ell_{0}$-minimization problem: 
\begin{equation}\label{Pnew}
\begin{array}{lcl}
(P_{0})&\min\limits_{x\in R^{n}}&\left \| x \right \|_{0}\\
& $s.t.$ & \left \| y-Ax \right \|_{2}\leqslant \epsilon,~ Bx\leqslant b,
\end{array} 
\end{equation}
 where $A\in R^{m\times n}$ and $B\in R^{l\times n}$ are two matrices with $m\ll n$ and  $l\leq n$, $y\in R^{m}$ and $b\in R^{l}$ are two given vectors,  $\epsilon\geq 0$ is a given parameter,  and $\left \| x \right \|_{2}=(\sum_{i=1}^{n}\left| x_{i}\right|^{2})^{1/2}$ is the $\ell_{2}$-norm of $x$. In compressed sensing (CS),  the parameter $\epsilon$ is often used to estimate the level of the  measurement error $e=y-Ax$.  Clearly,  the purpose of \eqref{Pnew} is to find the sparsest point in the convex set $T$ defined by
 \begin{equation*}\label{feasible set T}
T=\lbrace x: \left\| y-Ax\right\|_{2}\leqslant \epsilon, Bx\leqslant b\rbrace.
\end{equation*} 
The constraint $Bx\leq b$  is motivated by some practical applications which lets the  model \eqref{Pnew} be general enough to  cover several sparsity models including a few models widely used in compressed sensing \cite{D06,C06,CERT2006,DDEK11}, 1-bit compressed sensing \cite{gupta2010,laska2011, zhaochun2016}, and statistical regression \cite{tibs2007,hoef2010,rinaldo2009}. For example,  some structured sparsity models, including the nonnegative sparsity model  \cite{candes2005,CERT2006,Redbook,zhaobook2018} and the monotonic sparsity model  (isotonic regression) \cite{greenbook},  are   the special cases of the model \eqref{Pnew}.  Clearly, the following commonly used $\ell_{0}$-minimization  models   are also  the special cases of \eqref{Pnew}:
 \begin{equation*}
\begin{array}{ll}
$(C1)$~\min\limits_{x} \lbrace \Vert x\Vert_{0}:~ y=Ax\rbrace; & $(C2)$~ \min\limits_{x} \lbrace  \Vert x\Vert_{0}:~\left\| y-Ax\right\|_{2}\leq \varepsilon\rbrace.
\end{array}
\end{equation*}
The problems (C1)  and (C2) can be called the standard $\ell_{0}$-minimization problems \cite{Redbook,candes2005,zhaobook2018}. 

From theory to computation methods, an intensive study of (C1) has been carried out over the past decade. Some sufficient criteria have been developed for  the problem (C1) to have a unique sparsest solution, for  example, the criteria based on the  spark \cite{spark}, mutual coherence \cite{donoho2001}, null space property (NSP) \cite{cohen2009}, restricted isotonic property (RIP) \cite{candes2005}, exact recovery condition \cite{ERC}, and the range space property (RSP) \cite{zhao2013,zhao2014, zhaobook2018}.  Zhao also \cite{zhaounique} developed several other sufficient conditions for the uniqueness of the solution to the problem (C1), such as sub-mutual coherence, scaled mutual coherence, coherence rank and sub-Babel function. 

However, the above existing sufficient conditions are still very restrictive from a practical viewpoint. 
In practical signal recovery scenarios, the measured data is always inaccurate, in which case we use the sparsity model (C2) instead of (C1) or  more complex ones such as the model \eqref{Pnew}.  Different from  (C1), the model \eqref{Pnew} involves  a perturbation parameter $\epsilon$.   As a result, the uniqueness of the sparse solutions  of \eqref{Pnew} might not be guaranteed, and hence it also makes sense to understand the conditions under which the model has multiple sparsest solutions.  It is known that  an  $\ell_{1}$-minimization problem may solve (C1) under the NSP  \cite{cohen2009} and RIP assumptions \cite{candes2005} which ensures that the problem (C1) has a unique sparsest solution. However, Zhao \cite{zhao2013} has  shown that even if an underdetermined linear system admits multiple sparsest solutions, the $\ell_{1}$-minimization problem is still able  to  solve (C1) under a mild RSP assumption which does not necessarily require the uniqueness of the sparsest solution of the problem.  Therefore, in order to broadly understand the  property of $\ell_{0}$-problems, it is meaningful to identify some conditions under which the $\ell_{0}$-problem has multiple solutions. To this goal,  we characterize  the necessary conditions for a vector to be the sparsest solution of the problem, and sufficient conditions for the multiplicity of the solutions of \eqref{Pnew},  and  the condition for the solution set of \eqref{Pnew} to be bounded.

This paper is organized as follows. In Section \ref{property on saprse recovery}, we show some theoretical properties  of the problem \eqref{Pnew} such as the necessary conditions for a point being the sparsest solution to the problem \eqref{Pnew}. Section \ref{section mutiple solution} gives some sufficient conditions for the  nonuniqueness of the sparsest solutions of the problem \eqref{Pnew}. In Section \ref{the choice delta}, we develop some sufficient conditions for the boundedness of the solution set of the problem \eqref{Pnew}.  

$\bf{Notation:}$  The $\ell_{p}$-norm on $R^{n}$ is defined as $\left\| x\right\|_{p}=(\sum_{i=1}^{n}\left| x_{i}\right|^{p})^{1/p}$, where $p\geq 1$.    The field of real numbers is denoted by $R$ and the $n$-dimensional Euclidean space is denoted by $R^{n}$. 
The complementary set of $S\subseteq \left\{ 1,\dots,n\right\}$ with respect to $\{1,\dots, n\}$ is denoted by $\bar{S}$, i.e., $\bar{S}=\lbrace 1,\dots,n\rbrace \setminus S$. 
 For a given vector $x\in R^{n}$,   $x_{S}$ and $\vert x\vert$ denotes the vector supported on $S$ and  the vector with components $\vert x\vert_{j}=\vert x_{j}\vert$, $j=1,\dots,n$, respectively. Given a matrix $A$,   $a_{i,j}$ denotes the entry of $A$ in row $i$ and column $j$.  
  $A_{S}$ denotes the submatrix of $A\in R^{m\times n}$ obtained by deleting the columns indexed by $\bar{S}$, and $A_{I,S}$ denotes the submatrix of $A$ with components $a_{i,j}$ for $i\in I,~ j\in S$.

\section{Necessary conditions for the solutions of $(P_{0})$}\label{property on saprse recovery}

We first develop some necessary conditions for a point to be the solution of \eqref{Pnew}, which are summarized in the following Theorem \ref{sparse solution} and Theorem \ref{max matrix}.

\begin{theorem}\label{sparse solution}
If $x^{*}$ is the sparsest solution to \eqref{Pnew} where $A\in R^{m\times n}$ and $B\in R^{l\times n}$ are two matrices with columns $a_{i} ~ (i=1,2,\dots, n)$ and $b_{i}~ (i=1,2,\dots, n)$ respectively, then 
\begin{equation}\label{sp eq1}
\mathrm{Null}(A_{S})\cap \mathrm{Null}(B_{S})= \lbrace 0 \rbrace,
\end{equation}
  where  $S\subseteq \lbrace 1,2, \ldots, n\rbrace$ is the support set of $x^{*}$.
\end{theorem}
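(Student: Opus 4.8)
The plan is to argue by contradiction using a perturbation supported on $S$ that lies simultaneously in both null spaces. Suppose $\mathrm{Null}(A_{S})\cap \mathrm{Null}(B_{S})\neq \lbrace 0\rbrace$, so there is a nonzero $v\in R^{|S|}$ with $A_{S}v=0$ and $B_{S}v=0$. First I would lift $v$ to a vector $\tilde v\in R^{n}$ whose entries on $S$ coincide with $v$ and whose entries off $S$ are zero. Then $A\tilde v = A_{S}v = 0$ and $B\tilde v = B_{S}v = 0$, and the support of $\tilde v$ is contained in $S$.

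Next I would consider the affine family $x(t)=x^{*}+t\tilde v$ and verify feasibility for every $t\in R$. Since $A\tilde v=0$, we have $Ax(t)=Ax^{*}$, so $\left\| y-Ax(t)\right\|_{2}=\left\| y-Ax^{*}\right\|_{2}\leq \epsilon$; since $B\tilde v=0$, we have $Bx(t)=Bx^{*}\leq b$. Hence $x(t)\in T$ for all $t$. This is the crucial structural point: because the perturbation annihilates both $A$ and $B$ on $S$, feasibility is preserved \emph{exactly} along the entire line rather than only infinitesimally, which is what permits a finite step instead of a mere local argument.

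Then I would use this freedom to destroy one coordinate of the support. As $v\neq 0$ and $\tilde v$ is supported on $S$, there is an index $i\in S$ with $\tilde v_{i}\neq 0$; since $i\in S=\mathrm{supp}(x^{*})$, we also have $x^{*}_{i}\neq 0$. Choosing $t^{*}=-x^{*}_{i}/\tilde v_{i}$ yields $x(t^{*})_{i}=0$, while $x(t^{*})_{j}=0$ for every $j\notin S$ because both $x^{*}$ and $\tilde v$ vanish there. Therefore $\mathrm{supp}(x(t^{*}))\subseteq S\setminus \lbrace i\rbrace$, giving $\left\| x(t^{*})\right\|_{0}\leq |S|-1<\left\| x^{*}\right\|_{0}$. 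Since $x(t^{*})$ is feasible yet strictly sparser, this contradicts the assumption that $x^{*}$ is a sparsest solution, and so the intersection must be trivial.

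I expect the only delicate point to be the support bookkeeping: confirming that the chosen step keeps the off-$S$ entries at zero while strictly reducing the number of nonzeros, and correctly identifying the abstract vector $v$ with a perturbation acting on the $S$-coordinates of $R^{n}$. The analytic content is otherwise immediate, since both constraints are invariant along the line $x(t)$ by construction.
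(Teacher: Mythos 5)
Your proof is correct and is essentially the paper's argument: the vector $x(t^{*})=x^{*}-\frac{x^{*}_{i}}{\tilde v_{i}}\tilde v$ you construct is exactly the paper's $\bar{x}$, which the paper obtains more indirectly by rewriting the columns $a_{j},b_{j}$ as linear combinations of the others and substituting into the constraints. Your presentation via the feasible line $x^{*}+t\tilde v$ is cleaner but not a different route.
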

\begin{proof}
Let $x^{*}$ be the sparsest solution of \eqref{Pnew} and $k$ be the optimal value of \eqref{Pnew}. We  prove  this result by contradiction.  If $\mathrm{Null}(A_{S})\cap \mathrm{Null}(B_{S})\neq \lbrace 0 \rbrace$, there exists a nonzero vector $\Delta x\in R^{n}$ with $(\Delta x)_{S}\neq 0$ 
such that $A_{S}(\Delta x)_{S}=0$ and $B_{S}(\Delta x)_{S}=0, $ which can be written as 
\begin{equation*}\label{sparse sol eq2}
\sum_{i\in S} a_{i}(\Delta x)_{i}=0~\mathrm{and}~ \sum_{i\in S}b_{i}(\Delta x)_{i}=0.
\end{equation*}
Since $(\Delta x)_{S}\neq 0$, there is a nonzero component $(\Delta x)_{j}$,  $j\in S$, such that the corresponding   $a_{j}$ and  $b_{j}$ can be represented as the linear combination  of the other  columns,  that is,
\begin{equation}\label{sparse sol eq3}
a_{j}=-\sum_{i\in S, i\neq j} a_{i}\frac{(\Delta x)_{i}}{(\Delta x)_{j}}, ~~ b_{j}=-\sum_{i\in S, i\neq j} b_{i}\frac{(\Delta x)_{i}}{(\Delta x)_{j}}.
\end{equation}
Since  $x^{*}$ is feasible to the problem \eqref{Pnew}, we have 
$$\left\| y-\left(\sum_{i\in S, i\neq j}a_{i}x_{i}^{*}\right)-a_{j}x_{j}^{*}\right\|_{2}\leqslant \epsilon, ~ \left(\sum_{i\in S, i\neq j}b_{i}x_{i}^{*}\right)+b_{j}x_{j}^{*}\leqslant b.$$
Substituting $a_{j}$ and $b_{j}$ in \eqref{sparse sol eq3} into the  above system yields
\begin{equation}\label{sparse sol eq4}
\left\| y-\sum_{i\in S, i\neq j}\left(x_{i}^{*}-\frac{(\Delta x)_{i}}{(\Delta x)_{j}}x_{j}^{*}\right)a_{i}\right\|_{2}\leqslant \epsilon, ~ \sum_{i\in S, i\neq j}\left(x_{i}^{*}-\frac{(\Delta x)_{i}}{(\Delta x)_{j}}x_{j}^{*}\right)b_{i}\leqslant b.
\end{equation}
The inequalities in \eqref{sparse sol eq4} imply that the vector $\bar{x}$ with $\left\| \bar{x}\right\|_{0}\leqslant k-1$ defined as  
$$\bar{x}_{i}=\left\{\begin{matrix}
 x_{i}^{*}-\frac{(\Delta x)_{i}}{(\Delta x)_{j}}x_{j}^{*},~ & i\in S, i\neq j,\\ 
 0, ~ & i=j,\\
 0, ~ & i\notin S.
\end{matrix}\right.$$
is a feasible solution of \eqref{Pnew}.  This means  that  $\bar{x}$ is a  solution of \eqref{Pnew} sparser than $x^{*}$. This is a contradiction. The desired result follows. 
\end{proof}
Note that $\mathrm{Null}(A_{S})\cap \mathrm{Null}(B_{S})=\lbrace 0\rbrace$ means $\left[ \begin{array}{l}
  A  \\
  B  \\
\end{array} \right]_{S}$  has full column rank. We make the following comments for the condition $\mathrm{Null}(A_{S})\cap \mathrm{Null}(B_{S})= \lbrace 0 \rbrace$. 

\begin{remark}
 It can be seen that $\mathrm{Null}(A_{S})\cap \mathrm{Null}(B_{S})= \lbrace 0 \rbrace$ has some equivalent forms. Since $B_{S}x^{*}_{S}\leq b$ can be decomposed by active and inactive constraints, the following conditions can be regarded as the equivalent conditions for \eqref{sp eq1}:

    (i) $\mathrm{Null}\binom{A_{S}}{B_{\bar{I},S}}\cap \mathrm{Null}(B_{I,S})= \lbrace 0 \rbrace$; 

(ii) $\mathrm{Null}\binom{A_{S}}{B_{I,S}}\cap \mathrm{Null}(B_{\bar{I},S})= \lbrace 0 \rbrace$;

(iii) $\mathrm{Null}(A_{S})\cap \mathrm{Null}(B_{I, S})\cap \mathrm{Null}(B_{\bar{I},S})= \lbrace 0 \rbrace$.

\noindent Here $I\subseteq \lbrace 1,2, \ldots, m\rbrace$ is the index set of active constraints in $B_{S}x^{*}_{S}\leqslant b$ and $\bar{I}= \lbrace 1,2, \ldots, m\rbrace \setminus I$ is the index set of inactive constraints in $B_{S}x^{*}_{S}\leqslant b$.
\end{remark}
 Let $\vert I(x)\vert$ be the cardinality of active constraints in $Bx\leqslant b$ with respect to $x$. Denote the sparsest solution set by
\begin{equation}\label{sparse solution set}
\Lambda=\lbrace x\in R^{n}: \left\| x \right\|_{0}=k, ~x\in T\rbrace, 
\end{equation}
where $k$ is the optimal value of \eqref{Pnew}.  From the above remark, we see that the condition (\ref{sp eq1}) is equivalent to (ii) above. We may develop more specific necessary conditions than these conditions. For instance, in terms of maximum cardinality of $I(x),$ we can prove  the following result. 

\begin{theorem}\label{max matrix}
Let $x^{*}$ be a  solution to \eqref{Pnew} and $S$ be the support of $x^{*}$. If $x^{*}$ admits the maximum cardinality of  $ I(x)$,  $x\in \Lambda$, i.e., $\vert I(x^{*})\vert=\max\lbrace \vert I(x)\vert: x\in \Lambda \rbrace$, then 
\begin{equation}\label{Mstar}
M^{*}=\left[ \begin{array}{c}
 A_{S}\\ 
B_{I,S}
\end{array} \right]
\end{equation}
has  full column rank where  $I=I(x^{*})$.
\end{theorem}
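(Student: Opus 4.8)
The plan is to recast the conclusion as $\mathrm{Null}(M^{*})=\lbrace 0\rbrace$ and argue by contradiction. Suppose $M^{*}$ does not have full column rank. Then there is a nonzero vector $u$ (indexed by $S$) with $A_{S}u=0$ and $B_{I,S}u=0$. My first step is to observe that $u$ cannot additionally satisfy $B_{\bar{I},S}u=0$: if it did, the vector $\Delta x\in R^{n}$ supported on $S$ with $(\Delta x)_{S}=u$ would lie in $\mathrm{Null}(A_{S})\cap\mathrm{Null}(B_{S})$, contradicting Theorem \ref{sparse solution}. Hence $B_{\bar{I},S}u\neq 0$, which provides exactly the slack direction needed to activate a previously inactive constraint.

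Next I would follow the feasible ray $x(t)=x^{*}+t\,\Delta x$ and track which constraints move. Because $A_{S}u=0$, we have $Ax(t)=Ax^{*}$, so the residual constraint $\left\|y-Ax(t)\right\|_{2}\leqslant\epsilon$ holds for every $t$. Because $x^{*}$ activates the rows in $I$, i.e. $B_{I,S}x^{*}_{S}=b_{I}$, and $B_{I,S}u=0$, we get $B_{I}x(t)=b_{I}$ for every $t$, so the whole active set $I$ stays active along the ray. The only constraints that move are the inactive ones, governed by $B_{\bar{I},S}x^{*}_{S}+t\,B_{\bar{I},S}u$, which start strictly below $b_{\bar{I}}$. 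Since $v:=B_{\bar{I},S}u\neq 0$, after replacing $u$ by $-u$ if necessary I may assume $v$ has a positive entry; then $t_{+}:=\min\lbrace (b_{r}-(B_{\bar{I},S}x^{*}_{S})_{r})/v_{r}:v_{r}>0\rbrace$ is finite and strictly positive, and at $t=t_{+}$ at least one previously inactive constraint becomes active while every constraint remains satisfied. Thus $x(t_{+})$ is feasible with active set containing $I$ together with at least one new row, so $\vert I(x(t_{+}))\vert\geq\vert I\vert+1$.

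Finally I would verify that $x(t_{+})\in\Lambda$, i.e. that moving to $t_{+}$ does not change the objective value. Since $\Delta x$ is supported on $S$, the support of $x(t)$ is always contained in $S$, so it suffices to rule out that some component $x^{*}_{i}+t\,u_{i}$, $i\in S$, vanishes for some $t\in(0,t_{+}]$. If it did, the corresponding $x(t)$ would be feasible (all constraints are still satisfied for $t\leq t_{+}$) yet strictly sparser than $x^{*}$, contradicting the optimality of $k$. Hence the support of $x(t_{+})$ equals $S$, so $\left\|x(t_{+})\right\|_{0}=k$ and $x(t_{+})\in\Lambda$ with $\vert I(x(t_{+}))\vert>\vert I(x^{*})\vert$, contradicting the maximality of $\vert I(x^{*})\vert$. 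This contradiction forces $\mathrm{Null}(M^{*})=\lbrace 0\rbrace$. I expect the support-preservation step to be the main obstacle: one must invoke the sparsest-solution hypothesis to guarantee that a component cannot vanish before (or exactly at) the step $t_{+}$ at which a new constraint activates, since otherwise the constructed point would not lie in $\Lambda$ and the comparison of active-set cardinalities would collapse.
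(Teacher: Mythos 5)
Your proof is correct and follows essentially the same route as the paper: assume $\mathrm{Null}(M^{*})\neq\lbrace 0\rbrace$, use Theorem \ref{sparse solution} to get $B_{\bar{I},S}u\neq 0$, and push along the ray $x^{*}+t\,\Delta x$ until a previously inactive constraint activates, contradicting the maximality of $\vert I(x^{*})\vert$. Your treatment is in fact slightly tidier than the paper's on the support-preservation step (the paper dismisses it with a ``without loss of generality''), and your sign-flip reduction to a single positive step $t_{+}$ replaces the paper's two-sided interval $[\lambda_{\min},\lambda_{\max}]$, but these are cosmetic differences.
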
 
\begin{proof}
Let $x^{*}$ be a sparsest solution of \eqref{Pnew} which satisfies the assumption in  Theorem \ref{max matrix}. 
We  prove the result by contradiction. Assume  that  $\mathrm{Null}(M^{*})\neq \lbrace 0\rbrace$. Then there exists a nonzero vector $\Delta x$ with $(\Delta x)_{\bar{S}}=0$ and $(\Delta x)_{S}\neq 0$ such that 
\begin{equation}\label{max matrxi eq1}
A_{S}(\Delta x)_{S}=0~\mathrm{and}~ B_{I,S}(\Delta x)_{S}=0 .
\end{equation}
Then we construct a new vector $\bar{x}(\lambda)$ such that 
$$\bar{x}(\lambda)=x^{*}+\lambda (\Delta x)$$
where $\lambda $ is a  parameter.   Clearly, $\bar{x}(\lambda)$ continuously changes  with $\lambda$ and
\begin{equation}\label{maximun cardinality eq3}
\mathrm{supp}(\bar{x}(\lambda))\subseteq \mathrm{supp}(x^{*})~\mathrm{and}~\left\| \bar{x}(\lambda)\right\|_{0}\leqslant \left\| x^{*}\right\|_{0} 
\end{equation}
for all $\lambda$. If $\bar{x}(\lambda)$ satisfies the following system: 
\begin{equation}\label{maxmum cardinality eq2}
\left\| y-A_{S}z_{S}\right\|_{2}\leqslant \epsilon, ~ B_{I,S}z_{S}\leqslant  b_{I},~ B_{\bar{I},S}z_{S}\leqslant b_{\bar{I}},
\end{equation}
 then $\bar{x}(\lambda)$ is a feasible solution to \eqref{Pnew}, and hence $\bar{x}(\lambda)$ is a sparsest solution to \eqref{Pnew} which follows from \eqref{maximun cardinality eq3} and the fact that $x^{*}$ is a sparsest solution.
We now  prove that there exists a nonzero $\lambda$ such that $\bar{x}(\lambda)$ satisfies  the  system \eqref{maxmum cardinality eq2}.  Based on \eqref{max matrxi eq1}, the following two constraints are satisfied for all $\lambda$:
\begin{equation}\label{maximum cardinality eq1}
\left\| y-A_{S}\bar{x}_{S}(\lambda)\right\|_{2}\leqslant \epsilon, ~ B_{I,S}\bar{x}_{S}(\lambda)=b_{I}.
\end{equation}
 We only need  to check if $\bar{x}(\lambda)$ satisfies the third inequality in \eqref{maxmum cardinality eq2}.   
First we  denote three disjoint sets $J_{+}$, $J_{-}$, $J_{0}$ as follows, 
\begin{equation}\label{JJJ}
J_{+}=\lbrace j: ( B_{\bar{I},S}(\Delta x)_{S})_{j}>0\rbrace,~ J_{-}=\lbrace j: ( B_{\bar{I},S}(\Delta x)_{S})_{j}<0\rbrace, ~ J_{0}=\lbrace j: ( B_{\bar{I},S}(\Delta x)_{S})_{j}=0\rbrace.
\end{equation}
Consider the following  cases:

\begin{itemize}[leftmargin=-.05in,label=\color{blue}\theenumi]
\item[$\textbf{(M1)}$]
 $ J_{+}\cup J_{-}= \emptyset$. In this case  $B_{\bar{I},S}(\Delta x)_{S}=0$.  Combining with \eqref{max matrxi eq1} yields $(\Delta x)_{S}\in\mathrm{Null}(M^{*})\cap \mathrm{Null}(B_{\bar{I}, S})$. This contradicts to Theorem \ref{sparse solution}. Thus  we have only the next case.
\item[$\textbf{(M2)}$]
 $ J_{+}\cup J_{-}\neq \emptyset$. In this case $B_{\bar{I},S}(\Delta x)_{S}\neq 0$.   Let $\lambda\in [\lambda_{\min}, \lambda_{\max}]$ be  continuously increased from $\lambda_{\min}$ to $\lambda_{\max}$ where
$$\lambda_{\max}=\min_{j\in J_{+}}\biggr\lbrace\frac{(b_{\bar{I}}-B_{\bar{I},S}x^{*}_{S})_{j}}{( B_{\bar{I},S}(\Delta x)_{S})_{j}}\biggr\rbrace, ~ \lambda_{\min}=\max_{j\in J_{-}}\biggr\lbrace\frac{(b_{\bar{I}}-B_{\bar{I},S}x^{*}_{S})_{j}}{( B_{\bar{I},S}(\Delta x)_{S})_{j}}\biggr\rbrace.$$
Clearly, due to \eqref{JJJ},  $\lambda_{\min}<0$ and $\lambda_{\max}>0$.
For  $\lambda\in (0, \lambda_{\max}]$, we  have that:
$$(B_{\bar{I},S}\bar{x}_{S}(\lambda))_{i}\left\{\begin{matrix}
\leqslant (b_{\bar{I}})_{i} ,~ & i\in J_{+}, \\ 
 <(b_{\bar{I}})_{i}+\lambda*0=(b_{\bar{I}})_{i}, ~ & i\in J_{-},\\
 < (b_{\bar{I}})_{i}, ~& i\in J_{0}.
\end{matrix}\right.$$
The above second and third inequalities are obvious,  and the first inequality  follows from  the fact that for $i\in J_{+}$,
$$\begin{array}{lll}
(B_{\bar{I},S}\bar{x}_{S}(\lambda))_{i}&= (B_{\bar{I},S}x^{*}_{S})_{i}+\lambda(B_{\bar{I},S}(\Delta x)_{S})_{i} \\
 & \leqslant   (B_{\bar{I},S}x_{S}^{*})_{i}+\lambda_{\max} (B_{\bar{I},S}(\Delta x)_{S})_{i}\\
&\leqslant (B_{\bar{I},S}x^{*}_{S})_{i}+\frac{(b_{\bar{I}}-B_{\bar{I},S}x^{*}_{S})_{i}}{( B_{\bar{I},S}(\Delta x)_{S})_{i}}(B_{\bar{I},S}(\Delta x)_{S})_{i}=(b_{\bar{I}})_{i}.
\end{array} $$
For $\lambda\in[\lambda_{\min}, 0)$, we have that 
$$(B_{\bar{I},S}\bar{x}_{S}(\lambda))_{i}\left\{\begin{matrix}
 < (b_{\bar{I}})_{i}+\lambda*0=(b_{\bar{I}})_{i}, ~ & i\in J_{+},\\
 \leqslant (b_{\bar{I}})_{i}, ~& i\in J_{-},\\
 < (b_{\bar{I}})_{i}, ~ & i\in J_{0},
\end{matrix}\right.$$
where the second inequality follows from the fact that for $i\in J_{-}$,
$$\begin{array}{lll}
(B_{\bar{I},S}\bar{x}_{S}(\lambda))_{i}&\leqslant   (B_{\bar{I},S}x^{*}_{S})_{i}+\lambda_{\min} (B_{\bar{I},S}(\Delta x)_{S})_{i},\\
&\leqslant (B_{\bar{I},S}x^{*}_{S})_{i}+\frac{(b_{\bar{I}}-B_{\bar{I},S}x^{*}_{S})_{i}}{( B_{\bar{I},S}(\Delta x)_{S})_{i}} (B_{\bar{I},S}(\Delta x)_{S})_{i}=(b_{\bar{I}})_{i}.
\end{array} $$
 Note that $\bar{x}(\lambda)=x^{*}$ when $ \lambda=0$. Thus we have $$B_{\bar{I}, S}\bar{x}_{S}(\lambda)\leqslant b_{\bar{I}}$$ for all $\lambda\in [\lambda_{\min}, \lambda_{\max}]$.   Combining this with \eqref{maximum cardinality eq1}, we see that $\bar{x}(\lambda)\neq x^{*}$ for all $\lambda\in [\lambda_{\min}, \lambda_{\max}]$ satisfying \eqref{maxmum cardinality eq2}  and hence $\bar{x}(\lambda)$ is a feasible solution to \eqref{Pnew}.  Now starting from $ \lambda=0$, we continuously increase the value  $\vert\lambda\vert$.  
%In this process, if   one component of $\bar{x}_{S}(\lambda)$ satisfying \eqref{maxmum cardinality eq2} becomes $0$, then we  find a sparser solution than $x^{*}$ which contradicts the fact that $x^{*}$ is a sparsest solution. 
Thus, without loss of  generality, we assume  $\mathrm{supp}(\bar{x}(\lambda))=\mathrm{supp}(x^{*})$ when  $\vert\lambda\vert$ is increased continuously. Note that there exists a $\lambda^{*}\in [\lambda_{\min}, \lambda_{\max}]$ such that at least one index of inactive constraints in $B_{S}x^{*}_{S}\leq b$ will be added to the index set of active constraints in $B_{S}\bar{x}_{S}(\lambda^{*})\leqslant b$. That is, the index set of active  constraints in $B_{S}\bar{x}_{S}(\lambda^{*})\leqslant b$ includes $I$ and $D$:
$$ I(\bar{x}(\lambda^{*}))=I \cup D, ~\mathrm{where}~D=\lbrace j: (B_{\bar{I},S}\bar{x}_{S}(\lambda^{*}))_{j}=(b_{\bar{I}})_{j}\rbrace, ~  D \neq \emptyset.$$
This means $\vert I(\bar{x}(\lambda^{*}))\vert>\vert I(x^{*})\vert$ which contradicts the fact that $I(x^{*})$ has the maximum cardinality of $I(x)$ amongst all  sparsest solutions of \eqref{Pnew}. This contradiction shows that $M^{*}$ given in \eqref{Mstar} has full column rank.
\end{itemize}
\end{proof}

\section{ Multiplicity of sparsest solutions of  $(P_{0})$}\label{section mutiple solution}
 The  sparsest solutions of \eqref{Pnew} might not be unique  when the null space of $(A^{T},B^{T})^{T}$ is not reduced to the zero vector. In fact, any slight perturbation of the problem data $(A,B,b, y, \epsilon)$ may lead to the nonuniqueness of the solutions to the modified problem. This means that in most cases, the sparsest solutions for the problem \eqref{Pnew} are non-unique.  In this section, we  show that \eqref{Pnew} has infinitely many solutions under some mild conditions.
Let $x^{*}$ be a sparsest solution to \eqref{Pnew}. From Theorem \ref{sparse solution}, we  know that 
\begin{equation}\label{inf sol eq1}
\mathrm{Null}(A_{S})\cap \mathrm{Null}(B_{S})= \lbrace 0 \rbrace,
\end{equation}
which  can be separated into four cases: 
\begin{equation}\label{inf sol eq3}
\left\{\begin{matrix}
&\mathrm{Null}\binom{A_{S}}{B_{I,S}}\neq \lbrace 0\rbrace,~ \mathrm{Null}(B_{\bar{I},S})=\lbrace 0\rbrace,& \\ 
&\mathrm{Null}\binom{A_{S}}{B_{I,S}}\neq \lbrace 0\rbrace, ~  \mathrm{Null}(B_{\bar{I},S})\neq \lbrace 0\rbrace, &\mathrm{Null}\binom{A_{S}}{B_{S}}=\lbrace 0\rbrace,\\
 &\mathrm{Null}\binom{A_{S}}{B_{I,S}}=\lbrace 0\rbrace,~ \mathrm{Null}(B_{\bar{I},S})\neq \lbrace 0\rbrace,&\\
&\mathrm{Null}\binom{A_{S}}{B_{I,S}}=\lbrace 0\rbrace,~ \mathrm{Null}(B_{\bar{I},S})=\lbrace 0\rbrace,&
\end{matrix}\right.
\end{equation}
where $I$ and $\bar{I}$ are the index sets of active and inactive constraints in $B_{S}x_{S}^{*}\leqslant b$ respectively. 
Under some  conditions,  it  can be shown that for each case in \eqref{inf sol eq3},  \eqref{Pnew} has infinite sparsest solutions admitting the same support as that of the sparsest solution $x^{*},$ as indicated by the following Theorems \ref{infinite solution} and \ref{inf sol1}. Theorem \ref{infinite solution}  covers the first three cases and Theorem \ref{inf sol1} covers the last case in \eqref{inf sol eq3} respectively. 

\begin{theorem}\label{infinite solution}
Let $x^{*}$ be an arbitrary sparsest solution  to \eqref{Pnew} and $S$ be the support of $x^{*}$. The problem \eqref{Pnew} has infinitely many optimal solutions which have the same support as $x^{*}$ if the following condition  $(C1)$ holds:
\begin{itemize}[leftmargin=.2in]
\item {$ (C1)$} ~  $\mathrm{Null}\binom{A_{S}}{B_{I,S}}=\mathrm{Null}(M^{*})\neq\lbrace 0\rbrace$ and $x^{*}$ does not admit the maximum cardinality, i.e.,  $\vert I(x^{*})\vert\neq \max\lbrace\vert I(z)\vert:  z\in \Lambda\rbrace$ where $\Lambda$ is given in \eqref{sparse solution set}.
\end{itemize}
If  the corresponding error vector $e^{*}$, i.e., $e^{*}=y-Ax^{*}$, satisfies $\left\| e^{*}\right\|_{2}< \epsilon$, then \eqref{Pnew} has infinitely many optimal solutions which have the same support as $x^{*}$ if one of the following conditions $(C2)$, $(C3)$ and $(C4)$ holds:
\begin{itemize}[leftmargin=.2in]
\item{$( C2)$} $    ~ \mathrm{Null}(M^{*})=\lbrace 0\rbrace$ and $\mathrm{Null}(B_{S})\neq \lbrace 0 \rbrace$.
\item{$( C3)$} $  ~  \mathrm{Null}(M^{*})=\lbrace 0\rbrace$ and $\lbrace d: B_{I,S}d> 0\rbrace\cap \mathrm{Null}(B_{\bar{I},S})\neq \emptyset.$
\item{$( C4)$} $  ~  \mathrm{Null}(M^{*})=\lbrace 0\rbrace$ and $\lbrace d: B_{I,S}d< 0\rbrace\cap \mathrm{Null}(B_{\bar{I},S})\neq \emptyset.$
\end{itemize}
\end{theorem}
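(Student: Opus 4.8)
The plan is to treat all four cases by a single device: produce a one-parameter family $\bar{x}(\lambda)=x^{*}+\lambda\,\Delta x$, with $\Delta x$ supported on $S$ and chosen according to the case, and show that $\bar{x}(\lambda)$ stays feasible with support exactly $S$ while $\lambda$ ranges over a nondegenerate interval (or half-interval) $J$ containing $0$. Since $\bar{x}_{i}(\lambda)=x_{i}^{*}+\lambda(\Delta x)_{i}$ vanishes for at most one $\lambda$ for each $i\in S$ (as $x_{i}^{*}\neq 0$) and is identically zero for $i\notin S$, the support equals $S$ for all but finitely many $\lambda\in J$; for those $\lambda$ we have $\|\bar{x}(\lambda)\|_{0}=|S|=k$, so each such $\bar{x}(\lambda)$ is again a sparsest solution with support $S$. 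As $J$ has positive length, $J$ minus a finite set is infinite, which is exactly the claim. What differs between the cases is the choice of $\Delta x$ and the verification that the feasible $\lambda$-set is nondegenerate.

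For condition $(C1)$ note first that, by Theorem \ref{max matrix}, $\mathrm{Null}(M^{*})\neq\{0\}$ already forces $x^{*}$ to have non-maximal active cardinality, so the two clauses of $(C1)$ are consistent and I may simply take a nonzero $\Delta x\in\mathrm{Null}(M^{*})$ extended by zero off $S$. Then $A_{S}(\Delta x)_{S}=0$ and $B_{I,S}(\Delta x)_{S}=0$, so the residual is frozen, $\|y-A\bar{x}(\lambda)\|_{2}=\|e^{*}\|_{2}\leq\epsilon$ for every $\lambda$, and the active rows stay active, $B_{I,S}\bar{x}_{S}(\lambda)=b_{I}$; this is why $(C1)$ needs no slack in the residual. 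The only thing to control is $B_{\bar{I},S}\bar{x}_{S}(\lambda)\leq b_{\bar{I}}$, and here I would reuse the construction in the proof of Theorem \ref{max matrix}: case (M1) is impossible, since it would place $\Delta x$ in $\mathrm{Null}(A_{S})\cap\mathrm{Null}(B_{S})$ and contradict Theorem \ref{sparse solution}, so we are in case (M2), which yields $\lambda_{\min}<0<\lambda_{\max}$ with $B_{\bar{I},S}\bar{x}_{S}(\lambda)\leq b_{\bar{I}}$ throughout $[\lambda_{\min},\lambda_{\max}]$. The strict inequalities $\lambda_{\max}>0$ and $\lambda_{\min}<0$ hold because the inactive constraints are strictly satisfied at $x^{*}$, so the interval is nondegenerate.

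For $(C2)$, $(C3)$, $(C4)$ we have $\mathrm{Null}(M^{*})=\{0\}$, so no direction freezes the residual, and this is where $\|e^{*}\|_{2}<\epsilon$ enters: by continuity of $\lambda\mapsto\|e^{*}-\lambda A_{S}(\Delta x)_{S}\|_{2}$ and strict feasibility at $\lambda=0$, the residual constraint holds on some $(-\delta,\delta)$. It then remains to choose $\Delta x$ so the linear constraints $Bx\leq b$ survive on a nondegenerate $\lambda$-set. For $(C2)$ I would take $\Delta x\in\mathrm{Null}(B_{S})\setminus\{0\}$, whence $B_{S}\bar{x}_{S}(\lambda)=B_{S}x_{S}^{*}\leq b$ for all $\lambda$ (active rows stay active, inactive ones stay strict), giving the full interval $(-\delta,\delta)$. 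For $(C3)$ I would take $d$ with $B_{I,S}d>0$, $B_{\bar{I},S}d=0$ and move only to $\lambda\leq 0$: then $B_{I,S}\bar{x}_{S}(\lambda)=b_{I}+\lambda B_{I,S}d\leq b_{I}$, the inactive block is unchanged, and feasibility holds on $(-\delta,0]$. Case $(C4)$ is the mirror image, taking $d$ with $B_{I,S}d<0$, $B_{\bar{I},S}d=0$ and moving to $\lambda\geq 0$.

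I expect the main obstacle to be bookkeeping rather than one hard idea: one must check, separately in each case, that the chosen $\Delta x$ simultaneously respects the Euclidean residual ball, the active rows, and the inactive rows of $Bx\leq b$ over a $\lambda$-interval of positive length. The two pressure points are (i) in $(C1)$, invoking Theorem \ref{sparse solution} to rule out the degenerate case (M1) and thereby guarantee $\lambda_{\min}<0<\lambda_{\max}$; and (ii) in $(C2)$–$(C4)$, using the strict inequality $\|e^{*}\|_{2}<\epsilon$ to turn the nonlinear residual constraint into an open condition that survives a small perturbation — without this strictness the $\lambda$-set could collapse to $\{0\}$. Everything else, namely support preservation and optimality, then follows at once from $\|\bar{x}(\lambda)\|_{0}=|S|=k$ off a finite set of $\lambda$.
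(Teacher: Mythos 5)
Your proposal is correct and follows essentially the same route as the paper: a one-parameter family $x^{*}+\lambda\,\Delta x$ supported on $S$, with $\Delta x$ drawn from $\mathrm{Null}(M^{*})$ for $(C1)$, from $\mathrm{Null}(B_{S})$ for $(C2)$, and from $\{d: B_{I,S}d>0\ (\text{resp.}<0)\}\cap\mathrm{Null}(B_{\bar{I},S})$ for $(C3)$--$(C4)$, with feasibility maintained on a nondegenerate $\lambda$-interval via the strict inactive constraints and, in $(C2)$--$(C4)$, the slack $\|e^{*}\|_{2}<\epsilon$. The only cosmetic differences are that the paper writes explicit step-size bounds (e.g.\ $\lambda'_{\max}=(\epsilon-\|e^{*}\|_{2})/(\|A_{S}\mu\|_{\infty}\sqrt{m})$) where you invoke continuity, and that optimality of $k$ in fact forces $\mathrm{supp}(\bar{x}(\lambda))=S$ for every feasible $\lambda$, so your ``all but finitely many'' caveat is unnecessary but harmless.
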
 

\begin{proof}

$\textbf{(C1)}$  Consider the case $(C1)$ in Theorem \ref{infinite solution}.  We can find a nonzero $d$ such that $d_{S}\in \mathrm{Null}(M^{*})$ and $ d_{\bar{S}}=0$, leading to $$A_{S}d_{S}=0 ~ \mathrm{and} ~ B_{I,S}d_{S}=0.$$ Due to \eqref{inf sol eq1}, we know that $B_{\bar{I},S}d_{S}\neq 0$. Let $z(\lambda)$ be a vector which is constructed as $$z(\lambda)=x^{*}+\lambda d$$ where $\lambda$ is a  parameter.  It is easy to check that $z_{S}(\lambda)$ satisfies $$\left\| y-A_{S}z_{S}(\lambda)\right\|_{2}\leqslant \epsilon, ~B_{I,S}z_{S}(\lambda)=b_{I}.$$ 
Let the sets $J_{+}$, $J_{-}$ and $ J_{0}$ be still defined as the corresponding sets in \eqref{JJJ}  by replacing $(\Delta x)_{S}$ with $d_{S}$. Let $\lambda$ be restricted in $ [\lambda_{\min}, \lambda_{\max}]$  where 
$$\lambda_{\max}=\min_{j\in J_{+}}\lbrace\frac{(b_{\bar{I}}-B_{\bar{I},S}x^{*}_{S})_{j}}{( B_{\bar{I},S}d_{S})_{j}}\rbrace, ~ \lambda_{\min}=\max_{j\in J_{-}}\lbrace\frac{(b_{\bar{I}}-B_{\bar{I},S}x^{*}_{S})_{j}}{( B_{\bar{I},S}d_{S})_{j}}\rbrace.$$
 Similar to the case $(M2)$ in the proof of Theorem \ref{max matrix}, it can be proven that for all $\lambda\in [\lambda_{\min}, \lambda_{\max}]$, we have 
$B_{\bar{I},S}z_{S}(\lambda)\leqslant b_{\bar{I}}.$ Then $z(\lambda)$ is a feasible solution to \eqref{Pnew} when $ \lambda\in  [\lambda_{\min}, \lambda_{\max}]$, which together with the fact that $x^{*}$ is a sparsest solution and  $\mathrm{supp}(z(\lambda))\subseteq \mathrm{supp}(x^{*})$,  implies  for all $\lambda\in [\lambda_{\min}, \lambda_{\max}]$  $z(\lambda)$  is a sparsest solution of \eqref{Pnew} and hence $$\mathrm{supp}(x^{*})=\mathrm{supp}(z(\lambda)).$$ Since $z(\lambda)$ varies when $\lambda$ is changed continuously in the interval $[\lambda_{\min},  \lambda_{\max}]$, it implies that \eqref{Pnew} has infinitely many sparsest solutions with the same support as $x^{*}$.

 $\textbf{(C2)}$ Consider the case $(C2)$ in Theorem \ref{infinite solution}. We  choose a nonzero vector $\mu$  from the set $\mathrm{Null}(B_{S})$. Due to \eqref{inf sol eq1}, we have $A_{S}\mu\neq 0$. Let $t(\lambda)$ be a vector with components 
$$t_{S}(\lambda)=x^{*}_{S}+\lambda\mu, ~t_{\bar{S}}(\lambda)=0.$$
Then we have $B_{\bar{I},S}t_{S}(\lambda)< b_{\bar{I}} ~\mathrm{and}~ B_{I,S}t_{S}(\lambda)= b_{I}$ for all $\lambda$ which imply $B_{S}t_{S}(\lambda)\leqslant b.$
 Let $\vert\lambda\vert$ be restricted  in $(0, \lambda'_{\max}]$ with 
$$\lambda'_{\max}=\frac{\epsilon-\left\| e^{*}\right\|_{2}}{\left\| {A_{S}}\mu\right\|_{\infty}\sqrt{m}},$$
and $e^{*}=y-A_{S}x^{*}_{S}$. We have 
$$\begin{array}{lll}
\left\| y-A_{S}(x^{*}_{S}+\lambda\mu) \right\|_{2}&=&\left\| e^{*}-\lambda A_{S}\mu\right\|_{2},\\
 &\leqslant &\left\| e^{*}\right\|_{2}+\vert\lambda\vert\left\| A_{S}\mu\right\|_{2}\leqslant  \left\| e^{*}\right\|_{2}+\lambda'_{\max}\left\| A_{S}\mu\right\|_{2},\\
& = &\left\| e^{*}\right\|_{2}+\frac{\epsilon-\left\| e^{*}\right\|_{2}}{\sqrt{m}}\left\| A_{S}\mu/\left\| A_{S}\mu\right\|_{\infty}\right\|_{2},\\
 &\leqslant &\left\| e^{*}\right\|_{2}+\frac{\epsilon-\left\| e^{*}\right\|_{2}}{\sqrt{m}}\left\| \textbf{e}^{m} \right\|_{2}= \epsilon,\\
\end{array}$$
where the first inequality follows from the triangle inequality and $\textbf{e}^{m}$ is the vector of ones  with $m$ dimension.   Combining this with  the fact $B_{S}t_{S}(\lambda)\leqslant b$ implies that  $t(\lambda)$ is a feasible  solution of \eqref{Pnew} when $\lambda \in[0, \lambda'_{\max}]$. Same as the proof in $(C1)$,  it implies that $t(\lambda)$ is the sparsest solution of \eqref{Pnew} when $\lambda \in[0, \lambda'_{\max}]$, and hence  we obtain the desired result. Moreover,   the active and inactive indices in $Bt(\lambda)\leqslant b$ are the same as that in  $Bx^{*}\leqslant b$.

 $\textbf{(C3)}$ Consider the case $(C3)$ in Theorem \ref{infinite solution}.  We can find a nonzero vector $\xi$ from the set $\lbrace d: B_{I,S}d> 0\rbrace\cap \mathrm{Null}(B_{\bar{I},S})$ satisfying
$$ B_{\bar{I},S}\xi= 0~\mathrm{and}~ B_{I,S}\xi> 0.$$
Since the two cases $A_{S}\xi=0$ and $A_{S}\xi\neq 0$ do not contradict  $\mathrm{Null}(M^{*})=\lbrace 0\rbrace$,  we consider both of them.  
Let $v(\lambda)$ be a vector with components $$v_{S}(\lambda)=x^{*}_{S}+\lambda\xi ~ \mathrm{and}~ v_{\bar{S}}(\lambda)=0,$$ where $\lambda$ is a  parameter. Clearly, $\mathrm{supp}(v(\lambda))\subseteq \mathrm{supp}(x^{*})$ for $\lambda$. Now we claim that $v(\lambda)$ is a sparsest solution to \eqref{Pnew} in both cases of  $A_{S}\xi=0$ and  $A_{S}\xi\neq 0$ when $\lambda$ is restricted in certain interval.

{\bf 1)}   $A_{S}\xi\neq 0. $   When $\lambda\in [-\lambda''_{\max},0)$ with $\lambda''_{\max}=\frac{\epsilon-\left\| e^{*}\right\|_{2}}{\left\| {A_{S}}\xi\right\|_{\infty}\sqrt{m}}$, by the same  proof as in $(C2)$, we have $$\left\| y-A_{S}v_{S}(\lambda)\right\|_{2}\leqslant \epsilon.$$ It is easy to check that $$B_{I,S}v_{S}(\lambda)<b_{I} ~ \mathrm{and} ~ B_{\bar{I},S}v_{S}(\lambda)<b_{\bar{I}}.$$ Thus  $v(\lambda)$ is a feasible point  in $T$ for all $\lambda\in [-\lambda''_{\max},0]$. $\mathrm{supp}(v(\lambda))\subseteq \mathrm{supp}(x^{*})$ and the fact that $x^{*}$ is a sparsest point in $T$ imply that $v(\lambda)$ is a sparsest point in $T$ when $\lambda\in [-\lambda''_{\max},0]$.

{\bf 2)}  $A_{S}\xi= 0.   $   Here $\lambda$ can be any negative number so that $v(\lambda)$ is  a feasible point  in $T$. Similarly, $v(\lambda)$ is a sparsest solution to \eqref{Pnew} when $\lambda\leq 0$. Combining  $1)$ and $2)$ implies the desired result.

 $\textbf{(C4)}$ This proof is omitted. Note that $\lbrace d: B_{I,S}d> 0\rbrace\cap \mathrm{Null}(B_{\bar{I},S})\neq \emptyset$ is equivalent to  $\lbrace d: B_{I,S}d< 0\rbrace\cap \mathrm{Null}(B_{\bar{I},S})\neq \emptyset.$ Thus we can directly get the desired result.
\end{proof}
 It follows from  Theorem  \ref{max matrix} that   the linear dependence of the columns of $M^{*}$ implies that  $I(x^{*})$ does not have the maximum cardinality  amongst $I(x), x\in \Lambda$.  Therefore  the condition in $(C1)$ is mild. Note that the case $(C1)$ corresponds to the first two cases in \eqref{inf sol eq3}, and the cases $(C2)-(C4)$ correspond to the third case in \eqref{inf sol eq3}. Now we consider the last case in \eqref{inf sol eq3} and have the following theorem.
\begin{theorem}\label{inf sol1}
Let $x^{*}$ be an arbitrary sparsest solution  of \eqref{Pnew}, $S$ be the support of $x^{*}$. Assume that $\mathrm{Null}(M^{*})=\lbrace 0\rbrace$ and $\mathrm{Null}(B_{\bar{I},S})=\lbrace 0\rbrace$. Then \eqref{Pnew} has infinitely many optimal solutions with the same support as $x^{*}$  if  one of the following conditions  holds:
\begin{itemize}[leftmargin=0.2in]
  \item{$(D1)$}  $\lbrace d: B_{I,S}d> 0\rbrace\cap  \lbrace d: A_{S}d= 0\rbrace\neq \emptyset.$
\item{$(D2)$} $\lbrace d: B_{I,S}d< 0\rbrace\cap  \lbrace d: A_{S}d= 0\rbrace\neq \emptyset.$
\end{itemize}
If  the corresponding error vector $e^{*}$, i.e., $e^{*}=y-Ax^{*}$, satisfies $\left\| e^{*}\right\|_{2}< \epsilon$, then \eqref{Pnew} has infinitely many optimal solutions which have the same support as $x^{*}$ if one of the following conditions  holds:
\begin{itemize}[leftmargin=.2in]
\item{$(D3)$}  $ \mathrm{Null}(B_{I,S})\neq \lbrace 0 \rbrace.$
 \item{$(D4)$} $\lbrace d: B_{I,S}d> 0\rbrace\cap  \lbrace d: A_{S}d\neq 0\rbrace\neq \emptyset.$
\item{$(D5)$}  $\lbrace d: B_{I,S}d< 0\rbrace\cap  \lbrace d: A_{S}d\neq 0\rbrace\neq \emptyset.$
\end{itemize}
\end{theorem}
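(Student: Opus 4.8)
The plan is to reuse the path-following construction from the proofs of Theorem~\ref{max matrix} and Theorem~\ref{infinite solution}: in each case I would exhibit a nonzero direction $d\in R^{n}$ with $d_{\bar S}=0$, form the one-parameter family $z(\lambda)=x^{*}+\lambda d$, and show that $z(\lambda)$ stays feasible for all $\lambda$ in a nondegenerate interval. Since $\mathrm{supp}(z(\lambda))\subseteq \mathrm{supp}(x^{*})=S$ gives $\|z(\lambda)\|_{0}\leq \|x^{*}\|_{0}=k$, feasibility forces $z(\lambda)$ to be a sparsest solution; and because $x^{*}_{i}\neq 0$ for $i\in S$, continuity keeps every such component nonzero for $|\lambda|$ small, so $\mathrm{supp}(z(\lambda))=S$ and the family consists of infinitely many optimal solutions sharing the support $S$. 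The only thing that changes from case to case is how $d$ is chosen and which constraint supplies the slack that makes the admissible interval nonempty.

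For $(D1)$ I would take a nonzero $\xi$ from $\{d:B_{I,S}d>0\}\cap\{d:A_{S}d=0\}$ and move in the negative direction, i.e. $z(\lambda)=x^{*}+\lambda\xi$ with $\lambda<0$. Because $A_{S}\xi=0$, the residual is unchanged, $\|y-A_{S}z_{S}(\lambda)\|_{2}=\|e^{*}\|_{2}\leq\epsilon$, so the $\ell_{2}$-ball constraint holds for every $\lambda$ with no need for strict feasibility of $e^{*}$. Since $B_{I,S}\xi>0$ and $\lambda<0$, the active constraints satisfy $B_{I,S}z_{S}(\lambda)=b_{I}+\lambda B_{I,S}\xi<b_{I}$, while the strict inequality $B_{\bar I,S}x^{*}_{S}<b_{\bar I}$ persists for $|\lambda|$ small by continuity; hence $z(\lambda)$ is feasible on some interval $[-\delta,0)$. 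The condition $(D2)$ is the same statement with $\xi$ replaced by $-\xi$, so it follows by sign symmetry.

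For $(D3)$--$(D5)$, which are the cases where $\|e^{*}\|_{2}<\epsilon$ is assumed, the residual slack plays the role served above by $A_{S}\xi=0$. In $(D3)$ I would pick a nonzero $\mu\in\mathrm{Null}(B_{I,S})$; the hypothesis $\mathrm{Null}(M^{*})=\{0\}$ forces $A_{S}\mu\neq 0$ (else $\mu\in\mathrm{Null}(M^{*})$), and then $B_{I,S}z_{S}(\lambda)=b_{I}$ holds identically while $\|y-A_{S}z_{S}(\lambda)\|_{2}\leq\|e^{*}\|_{2}+|\lambda|\,\|A_{S}\mu\|_{2}\leq\epsilon$ for $|\lambda|$ bounded by a quantity of the form $(\epsilon-\|e^{*}\|_{2})/(\|A_{S}\mu\|_{\infty}\sqrt{m})$, exactly as in case $(C2)$. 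For $(D4)$ I would take $\xi$ with $B_{I,S}\xi>0$ and $A_{S}\xi\neq 0$ and move with $\lambda<0$: the active constraints become strict as in $(D1)$, and the residual stays inside the $\epsilon$-ball by the same bound (with $\mu$ replaced by $\xi$), reproducing case $(C3)$, part~1). The condition $(D5)$ again follows from $(D4)$ by replacing $\xi$ with $-\xi$. In every case the inactive constraints $B_{\bar I,S}z_{S}(\lambda)<b_{\bar I}$ and the support identity $\mathrm{supp}(z(\lambda))=S$ hold on a sufficiently short interval by continuity.

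The principal obstacle, and the reason this theorem is separated from Theorem~\ref{infinite solution}, is that the hypothesis $\mathrm{Null}(M^{*})=\{0\}$ eliminates the ``free'' direction used in case $(C1)$, namely a direction that simultaneously fixes the residual and all active constraints; here no such direction exists, so any admissible $d$ must strictly perturb either the residual or the active constraints. The crux is therefore to certify that the resulting interval of feasible $\lambda$ has positive length, and this is where the two hypothesis blocks split: in $(D1)$--$(D2)$ the room is created by pushing the active constraints strictly into their interior while leaving the residual untouched (so $\|e^{*}\|_{2}\leq\epsilon$ suffices), whereas in $(D3)$--$(D5)$ the room is the residual slack $\epsilon-\|e^{*}\|_{2}>0$. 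Once a one-sided interval of nonzero length is secured, the sparsity and support arguments are identical to those already established.
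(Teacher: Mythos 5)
Your proposal is correct and follows essentially the same route as the paper: in each of (D1)--(D5) you select the same type of direction $d$ (with $d_{\bar S}=0$), form $z(\lambda)=x^{*}+\lambda d$ with the sign of $\lambda$ chosen so the active constraints relax, control the residual either by $A_{S}d=0$ or by the slack $\epsilon-\|e^{*}\|_{2}>0$, and dispose of (D2) and (D5) by sign symmetry. The only cosmetic difference is that you handle the inactive constraints by a continuity argument on their strict slack, where the paper writes out the explicit interval endpoints $\lambda_{1},\lambda'_{1},\lambda''_{1}$; both yield a nondegenerate interval of feasible $\lambda$.
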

\begin{proof}
We start from $(D3)$.\\

$\textbf{(D3)}$ Since  $\mathrm{Null}(M^{*})=\lbrace 0\rbrace$ and $\mathrm{Null}(B_{I,S})\neq \lbrace 0\rbrace$,   for  $\forall \bar{d}\in \mathrm{Null}(B_{I,S})$, we have  
$$B_{I,S}\bar{d}=0~\mathrm{and}~ A_{S}\bar{d}\neq 0.$$
Since $\mathrm{Null}(B_{\bar{I},S})=\lbrace 0\rbrace$,  we have $B_{\bar{I},S}\bar{d}\neq 0$. Denote $$G_{0}=\lbrace j: (B_{\bar{I},S}\bar{d})_{j}=0\rbrace, ~G_{-}=\lbrace j: (B_{\bar{I},S}\bar{d})_{j}<0\rbrace, ~G_{+}=\lbrace j: (B_{\bar{I},S}\bar{d})_{j}>0\rbrace.$$ Clearly,  $G_{+}\cup G_{-}\neq \emptyset$. Let $\bar{z}(\lambda)$ be a vector with components $$\bar{z}_{S}(\lambda)=x^{*}_{S}+\lambda \bar{d} ~\mathrm{and} ~ \bar{z}_{\bar{S}}(\lambda)=0.$$ Clearly, $\mathrm{supp}(\bar{z}(\lambda))\subseteq \mathrm{supp}(x^{*})$ for all $\lambda$. Let $\vert \lambda\vert$ be restricted in $ (0, \min(\lambda_{1}, \lambda_{2})]$ where 
$$\lambda_{1}=\min_{j\in G_{+}\cup G_{-}}\frac{(b_{\bar{I}}-B_{\bar{I}, S}x_{S}^{*})_{j}}{  \vert (B_{\bar{I},S}\bar{d})\vert_{j}}, ~ \lambda_{2}=\frac{\epsilon-\left\| e^{*}\right\|_{2}}{\left\| A_{S}\bar{d}\right\|_{\infty}\sqrt{m}}.$$ For $i\in G_{+}\cup G_{-}$, 
$$\begin{array}{llll}
(B_{\bar{I},S}\bar{z}_{S}(\lambda))_{i}&=&(B_{\bar{I},S}x_{S}^{*})_{i}+\lambda (B_{\bar{I},S}\bar{d})_{i}\\
 & \leqslant & (B_{\bar{I},S}x_{S}^{*})_{i}+\vert\lambda\vert \vert (B_{\bar{I},S}\bar{d})_{i}\vert\\
&\leqslant & (B_{\bar{I},S}x_{S}^{*})_{i}+\lambda_{1} \vert (B_{\bar{I},S}\bar{d})_{i}\vert \\
& \leqslant &  (B_{\bar{I},S}x_{S}^{*})_{i}+\frac{(b_{\bar{I}}-B_{\bar{I}, S}x_{S}^{*})_{i}}{  \vert (B_{\bar{I},S}\bar{d})\vert_{i}} \vert (B_{\bar{I},S}\bar{d})_{i}\vert  =  (b_{\bar{I}})_{i}.
\end{array}$$
The above fact, combined with  $(B_{\bar{I},S}\bar{z}_{S}(\lambda))_{i}<(b_{\bar{I}})_{i},~ i\in G_{0}$, implies that $B_{\bar{I},S}\bar{z}_{S}(\lambda)\leqslant b_{\bar{I}}$. We also have $\left\| y-A_{S}\bar{z}_{S}(\lambda)\right\|_{2} \leqslant \epsilon$ which has been proven for many times in Theorem \ref{infinite solution}. These, combined with the fact that $B_{I,S}\bar{z}_{S}(\lambda)=b_{I}$, implies that 
$\bar{z}(\lambda)$ is a sparsest point in $T$ with the same support as $x^{*}$ when $\lambda\in [0, \min(\lambda_{1}, \lambda_{2})]$.

$\textbf{(D4)}$ Clearly, there exists a nonzero vector $d'$ such that 
$$B_{I,S}d'>0, ~ A_{S}d'\neq 0.$$
Since $\mathrm{Null}(B_{\bar{I},S})=\lbrace 0\rbrace$, we have $B_{\bar{I},S}d'\neq 0$.
Denote $$J'_{0}=\lbrace j: (B_{\bar{I},S}d')_{j}=0\rbrace,  ~ J'_{-}=\lbrace j: (B_{\bar{I},S}d')_{j}<0\rbrace, ~ J'_{+}=\lbrace j: (B_{\bar{I},S}d')_{j}>0\rbrace.$$ Clearly, $J'_{+}\cup J'_{-}\neq \emptyset$. Let $z'(\lambda)$ be a vector with components $z'_{S}(\lambda)=x^{*}_{S}+\lambda d'$ and $z'_{\bar{S}}(\lambda)=0.$ 
Let $\lambda$ be restricted in  $ [ \max(\lambda'_{1}, \lambda'_{2}),0)$ where 
$$\lambda'_{1}=\max_{j\in  J'_{-}}\frac{(b_{\bar{I}}-B_{\bar{I}, S}x_{S}^{*})_{j}}{   (B_{\bar{I},S}d')_{j}}, ~ \lambda'_{2}=-\frac{(\epsilon-\left\| e^{*}\right\|_{2})}{\left\| A_{S}d'\right\|_{\infty}\sqrt{m}}.$$ 
For $i\in J'_{-}$, we have 
$$\begin{array}{llll}
(B_{\bar{I},S}z'_{S}(\lambda))_{i}&=&(B_{\bar{I},S}x_{S}^{*})_{i}+\lambda (B_{\bar{I},S}d')_{i}\leqslant  (B_{\bar{I},S}x_{S}^{*})_{i}+\lambda'_{1} (B_{\bar{I},S}d')_{i},\\
&\leqslant &(B_{\bar{I},S}x_{S}^{*})_{i}+\frac{(b_{\bar{I}}-B_{\bar{I}, S}x_{S}^{*})_{i}}{   (B_{\bar{I},S}d')_{i}}  (B_{\bar{I},S}d')_{i}=  (b_{\bar{I}})_{i}.\\
\end{array}$$
For $i\in J'_{+}\cup J'_{0}$, we have $(B_{\bar{I},S}z'_{S}(\lambda))_{i}<(b_{\bar{I}})_{i}. $ It can be proven that $\left\| y-A_{S}z'_{S}(\lambda)\right\|_{2} \leqslant \epsilon$ for $\lambda\in [\max(\lambda'_{1}, \lambda'_{2}),0)$, which combined with  the fact $B_{I,S}z'_{S}(\lambda)<b_{I}$ implies that $z'(\lambda)$ is a sparsest point in $T$ with the same support as $x^{*}$ when $ \lambda \in[ \max(\lambda'_{1}, \lambda'_{2}),0]$, i.e., $\mathrm{supp}(x^{*})=\mathrm{supp}(z'(\lambda))$. \\
$\textbf{(D1)}$ Clearly,  there exists a nonzero vector $d''$ such that 
$$B_{I,S}d''>0, ~ A_{S}d''= 0.$$
Since $\mathrm{Null}(B_{\bar{I},S})=\lbrace 0\rbrace$, we have $B_{\bar{I},S}d''\neq 0$.
Denote $$J''_{0}=\lbrace j: (B_{\bar{I},S}d'')_{j}=0\rbrace, ~ J''_{-}=\lbrace j: (B_{\bar{I},S}d'')_{j}<0\rbrace, ~ J''_{+}=\lbrace j: (B_{\bar{I},S}d'')_{j}>0\rbrace.$$ Clearly, $J''_{+}\cup J''_{-}\neq \emptyset$. Let $z''(\lambda)$ be a vector with components $$z''_{S}(\lambda)=x^{*}_{S}+\lambda d''~\mathrm{and}~ z''_{\bar{S}}(\lambda)=0.$$ 
Due to $A_{S}d''=0$,  $\left\|y-A_{S}z''_{S}(\lambda)\right\|_{2}\leqslant \epsilon$ is satisfied. Let $\lambda$ be restricted in $ [\lambda''_{1},0)$ where 
$$\lambda''_{1}=\max_{j\in  J''_{-}}\frac{(b_{\bar{I}}-B_{\bar{I}, S}x_{S}^{*})_{j}}{    (B_{\bar{I},S}d'')_{j}}.$$ 
Similar to the proof of  $B_{\bar{I},S}z'_{S}(\lambda)<b_{\bar{I}}$ in (D4), we have $B_{\bar{I},S}z''_{S}(\lambda)<b_{\bar{I}}$.
The fact $B_{I,S}z''_{S}(\lambda)<b_{I}$ and $\left\| y-A_{S}z''_{S}(\lambda)\right\|_{2} \leqslant \epsilon$ implies that $z''(\lambda)$ is a sparsest point in $T$ with the same support as $x^{*}$ when $\lambda\in [\lambda''_{1},0]$, i.e., $\mathrm{supp}(x^{*})=\mathrm{supp}(z''(\lambda))$.

$\textbf{(D2,5)}$ The proof is omitted. Note that (D2)  is equivalent to (D1)  and that (D5) is equivalent to (D4). Thus the desired results can be obtained immediately. 
\end{proof}

Through the above theoretical analysis, we know that \eqref{Pnew} may have  infinitely many  sparsest solutions. We also want to know whether the sparsest solution set $\Lambda$ given in \eqref{sparse solution set} is bounded or not. This question will be explored in Section \ref{the choice delta}. The example below is given to illustrate the results of Theorems \ref{infinite solution} and \ref{inf sol1}.

\begin{example}\label{example 1}
Consider the  system $\left\| y-Ax\right\|_{2}\leqslant \epsilon, ~Bx\leqslant b$ with $\epsilon=10^{-1}$, where
{\small $$ A=\left[ \begin{array}{cccc}
1 & 0 & -2 &5 \\ 
 0&  1&  4& -9\\ 
 1&  0&  -2&5 
\end{array} \right], ~B=\left[ \begin{array}{cccc}
-0.5 & 0 & 1 &-2.5 \\ 
 0.5&  -0.5&  -1& 2\\ 
 -3&  -3&  -2&3 
\end{array} \right], ~y=\left[ \begin{array}{c}
1\\ 
-1\\ 
1
\end{array} \right],~b=\left[ \begin{array}{c}
-0.5\\ 
1\\ 
-1
\end{array} \right].$$}
\end{example}
It can be seen that $(0,0,2,1)^{T}$ and  $(0,1,-1/2,0)^{T}$  are  the sparsest solutions to the above convex system. Next, we show that the above two sparsest solutions satisfy some assumptions in Theorems \ref{infinite solution} and \ref{inf sol1}.

\noindent \textbf{(i)} $x=(0,0,2,1)^{T}$: We have $A_{S}=\left[ \begin{array}{cc}
-2& 5  \\ 
 4&  -9\\ 
 -2&  5 
\end{array} \right]$, $B_{I,S}=\left[ \begin{array}{cc}
1 & -2.5  \\ 
 -2&  3
\end{array} \right]$ and $B_{\bar{I},S}=\left[ \begin{array}{cc}
-1 & 2
\end{array} \right]$. We can see that $$\mathrm{Null}(A_{S})=\lbrace 0\rbrace, ~ \mathrm{Null}(B_{I,S})=\lbrace 0\rbrace, ~ \mathrm{Null}(B_{\bar{I},S})\neq \lbrace 0\rbrace,$$ and  $$(2,1)^{T}\in \lbrace d: B_{I,S}d< 0\rbrace\cap \mathrm{Null}(B_{\bar{I},S}), ~ (-2,-1)^{T}\in \lbrace d: B_{I,S}d> 0\rbrace\cap \mathrm{Null}(B_{\bar{I},S})$$ which satisfy   $(C4)$ and $(C3)$ in Theorem \ref{infinite solution}. The value of $\lambda$  in the proof of $(C4)$ or $(C3)$ can be determined, i.e., 
$$\lambda\in (0, 1/10\sqrt{3}] ~ \mathrm{for}~ (2, 1)^{T}, ~ \lambda\in [-1/10\sqrt{3}, 0) ~ \mathrm{for} ~ (-2, -1)^{T}.$$ Then   another sparsest solution can be formed as 
$$(0,0,2,1)^{T}+\lambda(0,0,2,1)^{T}, ~\lambda\in (0, 1/10\sqrt{3}],$$
and hence the  system $T$ in this example has infinitely many sparsest  solutions.

\noindent \textbf{(ii)} $x=(0,1,-1/2,0)^{T}$: We have $A_{S}=\left[ \begin{array}{cc}
0 & -2  \\ 
 1&  4\\ 
 0&  -2 
\end{array} \right]$, $B_{I,S}=(0,1)$ and $B_{\bar{I},S}=\left[ \begin{array}{cc}
-0.5 & -1  \\ 
 -3&  -2
\end{array} \right] $. It is easy to check $$\mathrm{Null}(A_{S})=\mathrm{Null}(B_{\bar{I},S})=\lbrace 0\rbrace~\mathrm{and}~\mathrm{Null}(B_{I,S})\neq \lbrace 0\rbrace$$ so that this example satisfies $\mathrm{Null}(M^{*})=\lbrace 0\rbrace$ and $\mathrm{Null}(B_{\bar{I},S})=\lbrace 0\rbrace$. We can find two vectors  which meet $(D5)$  and $(D4)$ in Theorem \ref{inf sol1}, i.e., $$(4,-1)^{T}\in \lbrace d: B_{I,S}d< 0\rbrace \cap \lbrace d: A_{S}d\neq 0\rbrace, ~(-4,1)^{T}\in \lbrace d: B_{I,S}d> 0\rbrace \cap \lbrace d: A_{S}d\neq 0\rbrace.$$ 
Then the value of  $\lambda$ in the proof of $(D5)$ or $(D4)$ can be determined. Analogously, for all $\lambda\in [\max(-1/10, -1/20\sqrt{3}), 0]$,  the vector $(0,1,-1/2,0)^{T}+\lambda(0,-4,1,0)^{T}$ is a  sparsest point in $T$. 
Note that $\mathrm{Null}(B_{I,S})\neq \lbrace 0\rbrace$, which also meets $(D3)$ in Theorem \ref{inf sol1}. We can find $(1,0)^{T}\in \mathrm{Null}(B_{I,S})$, and therefore $\lambda_{1}$ and $\lambda_{2}$ in the proof of $(D3)$ can be determined. Consequently,  for all $\lambda$ such that $\vert\lambda\vert\in [0, 1/10\sqrt{3}]$, the vector $(0,1,-1/2,0)^{T}+\lambda(0,1,0,0)^{T}$ is a sparsest point in $T$. 

\section{ Boundedness of the solution set of ($P_{0}$)}\label{the choice delta}

 In this section, some sufficient  conditions for   the boundedness of the solution set $\Lambda$ of ($P_{0}$) are also  identified.  We start to discuss the lower bound on the absolute value of nonzero components of vectors in $\Lambda$ given in \eqref{sparse solution set}.  We only consider the case that $\Lambda$ is bounded. 
\begin{lemma}\label{lemma lower bound 1}
Let  $k$ be the optimal value of \eqref{Pnew}. If  the  solution set $\Lambda$ is bounded,  then there exists a positive lower bound $\gamma^{*}$ for the  nonzero component $\vert x_{i} \vert$ of any vector $\vert x\vert, ~x\in \Lambda$, i.e.,
\begin{equation}\label{lower bound eq1}
\vert x_{i} \vert \geq \gamma^{*}, ~ i\in \mathrm{supp}(x).
\end{equation}
\end{lemma}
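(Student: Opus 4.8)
The plan is to argue by contradiction, exploiting the boundedness of $\Lambda$ together with the fact that only finitely many index sets of cardinality $k$ can occur as supports of members of $\Lambda$. Assume $\Lambda\neq\emptyset$ (otherwise the claim is vacuous) and suppose no positive lower bound exists, i.e.\ $\inf\{|x_{i}|: x\in\Lambda,\ i\in\mathrm{supp}(x)\}=0$. Then for every $\nu\in\{1,2,\dots\}$ there is a vector $x^{(\nu)}\in\Lambda$ and an index $i_{\nu}\in\mathrm{supp}(x^{(\nu)})$ with $0<|x^{(\nu)}_{i_{\nu}}|<1/\nu$. Since each $x^{(\nu)}$ has exactly $k$ nonzero entries and there are only finitely many subsets of $\{1,\dots,n\}$ of cardinality $k$, I would first pass to a subsequence along which all the $x^{(\nu)}$ share a common support $S$ and along which $i_{\nu}\equiv i^{*}$ is a fixed index with $i^{*}\in S$. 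Because $\Lambda$ is bounded, $\{x^{(\nu)}\}$ is bounded, so by the Bolzano--Weierstrass theorem I may further assume $x^{(\nu)}\to\bar{x}$ for some $\bar{x}\in R^{n}$.

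Next I would read off the two key properties of the limit $\bar{x}$. First, the feasible set $T=\{x:\norm{y-Ax}_{2}\leqslant\epsilon,\ Bx\leqslant b\}$ is closed, being the intersection of the preimages of the closed sets $(-\infty,\epsilon]$ and $\prod_i(-\infty,b_i]$ under the continuous maps $x\mapsto\norm{y-Ax}_{2}$ and $x\mapsto Bx$; hence $\bar{x}\in T$, so $\bar{x}$ is feasible for \eqref{Pnew}. Second, for $j\notin S$ we have $x^{(\nu)}_{j}=0$ for all $\nu$, whence $\bar{x}_{j}=0$; and $\bar{x}_{i^{*}}=\lim_{\nu}x^{(\nu)}_{i^{*}}=0$ since $|x^{(\nu)}_{i^{*}}|<1/\nu$. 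Therefore $\mathrm{supp}(\bar{x})\subseteq S\setminus\{i^{*}\}$, which forces $\norm{\bar{x}}_{0}\leqslant k-1$.

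The contradiction is then immediate: $\bar{x}$ is feasible with $\norm{\bar{x}}_{0}\leqslant k-1<k$, contradicting that $k$ is the optimal value of \eqref{Pnew}. Hence the assumption fails and a positive uniform lower bound $\gamma^{*}>0$ exists. The step I expect to require the most care is the double subsequence extraction—fixing both a common support $S$ and a constant vanishing index $i^{*}$. This is precisely what guarantees that in the limit the coordinate $i^{*}$ is lost while no new coordinates appear (so that $\bar{x}$ is \emph{strictly} sparser); note that invoking only the lower semicontinuity of $\norm{\cdot}_{0}$ would yield $\norm{\bar{x}}_{0}\leqslant k$, which is not enough, so the coordinatewise argument above is essential.
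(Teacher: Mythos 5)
Your proof is correct and follows essentially the same route as the paper: a contradiction argument that extracts, via Bolzano--Weierstrass, a convergent sequence of sparsest solutions whose limit is feasible (by closedness of $T$) yet has at most $k-1$ nonzero entries. In fact your version is tighter than the paper's: the paper simply asserts that the limit point satisfies $\norm{x^{*}}_{0}\leqslant k-1$, whereas your double subsequence extraction (fixing a common support $S$ and a constant vanishing index $i^{*}$) supplies exactly the missing justification for that step, and it also subsumes the paper's separate treatment of the case where $\Lambda$ is finite.
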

\begin{proof}
 We prove this result by considering only two situations: $\Lambda$ is finite or infinite.

$(i)$ Let the set $\Lambda$ be finite and bounded. Denote  the cardinality of $\Lambda$ as $L$ and  the sparsest solutions of \eqref{Pnew} as $\lbrace x^{p}\rbrace$, where $1\leqslant p\leqslant L$. Obviously, we can find the minimum value among the nonzero absolute entries of all vectors in $\Lambda$ and set such a minimal value as $\gamma^{*}$, which is expressed as 
$$\gamma^{*}=\min_{1\leq p \leq L} \min_{i\in \mathrm{supp}(x^{p})} \vert x_{i}^{p}\vert.$$
 This implies that  the absolute  values of the nonzero components of vectors in $\Lambda$ have a positive lower bound $\gamma^{*}$.

 $(ii)$ Let the set $\Lambda$ be infinite and bounded. In this case, $L$ is an infinite number. Since  $\Lambda$ is bounded,  there exists a positive number $U$ such that the absolute value of all entries of  vectors in $\Lambda$ is less or equal than $U$. We assume that \eqref{lower bound eq1} does not hold for $x\in \Lambda$. This means there  exists a sequence   $\lbrace x^{p}\rbrace\in \Lambda$, such that the minimum nonzero absolute entries of $x^{p}$ approach to $0$, i.e.,  
\begin{equation*}\label{eq1 lower bound}
\min_{ i\in \mathrm{supp}(x^{p})} \vert x_{i}^{p} \vert \rightarrow 0 ~~\mathrm{as}~~p\rightarrow \infty.
\end{equation*} 
 Since $\Lambda$ is bounded, this implies that   $$ \vert x^{p}_{i}\vert \leqslant U, ~ i\in \mathrm{supp}(x^{p}).$$  Following by Bolzano-Weierstrass Theorem,  the sequence $\lbrace x^{p}\rbrace$ has at least  one convergent subsequence, denoted still by $\lbrace x^{p}\rbrace$,   with a limit point $x^{*}\in T$ satisfying $\left\|x^{*}\right\|_{0}\leqslant k-1$.   This is a contradiction, and hence the lower bound is ensured when $\Lambda$ is infinite and bounded.  Combining $(i)$ and $(ii)$ obtains the desired result.
\end{proof}

 The above lemma  ensures the existence of a positive lower bound for the  absolute value of the nonzero components of the  vectors in $\Lambda$ when $\Lambda$ is bounded. 
In the following lemma, some sufficient conditions are developed to guarantee the boundedness of  $\Lambda$. 

\begin{lemma}\label{lower bound}
Let  $k$ be the optimal value of \eqref{Pnew}. The sparse solution set $\Lambda$ is bounded if one of the following  conditions holds:
  \begin{itemize}[leftmargin=0.2in]
  \item{$(E1)$}    For any $\Pi \subseteq \lbrace 1,\dots, n\rbrace$ and $\vert\Pi\vert= k $, we  have \begin{equation}\label{eq4 lower bound}
  \lbrace \eta: A_{\Pi}\eta=0 \rbrace \cap \lbrace \eta: B_{\Pi}\eta\leqslant 0 \rbrace=\lbrace 0\rbrace.
\end{equation}   
  \item{$(E2)$}   Any $k$ columns in $A$ are linearly independent.
  \item{$(E3)$}  $k< spark(A)$, where $spark(A)$ denote the minimum number of linearly dependent columns in $A$.
  \end{itemize}
 \end{lemma}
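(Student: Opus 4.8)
The plan is to reduce all three hypotheses to $(E1)$ and then establish boundedness under $(E1)$ by a normalized-sequence argument. First I would observe that $(E3)$ is merely a restatement of $(E2)$: by definition $spark(A)$ is the least number of linearly dependent columns of $A$, so $k<spark(A)$ holds exactly when every choice of $k$ columns of $A$ is linearly independent, which is $(E2)$. Moreover $(E2)$ implies $(E1)$, since if every $k$ columns of $A$ are independent then for each $\Pi$ with $\vert\Pi\vert=k$ the matrix $A_{\Pi}$ has full column rank, whence $\lbrace \eta: A_{\Pi}\eta=0\rbrace=\lbrace 0\rbrace$ and the intersection in \eqref{eq4 lower bound} collapses to $\lbrace 0\rbrace$. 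Hence it suffices to prove the conclusion assuming $(E1)$.

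To handle $(E1)$ I would argue by contradiction. Suppose $\Lambda$ is unbounded; then there is a sequence $\lbrace x^{p}\rbrace\subseteq\Lambda$ with $\left\| x^{p}\right\|_{2}\to\infty$. Each $x^{p}$ has exactly $k$ nonzero entries, and there are only finitely many index sets of size $k$ in $\lbrace 1,\dots,n\rbrace$; therefore, by the pigeonhole principle and after passing to a subsequence (still denoted $\lbrace x^{p}\rbrace$), I may assume that all the $x^{p}$ share a common support $\Pi$ with $\vert\Pi\vert=k$. Normalizing by $z^{p}=x^{p}/\left\| x^{p}\right\|_{2}$ gives unit vectors with $\mathrm{supp}(z^{p})\subseteq\Pi$, and by the Bolzano-Weierstrass theorem $\lbrace z^{p}\rbrace$ has a convergent subsequence with limit $z^{*}$ satisfying $\left\| z^{*}\right\|_{2}=1$ and $\mathrm{supp}(z^{*})\subseteq\Pi$. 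Write $\eta=z^{*}_{\Pi}\in R^{k}$, which is nonzero.

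It then remains to pass the two feasibility constraints to the limit along this normalized sequence. Since $x^{p}$ is supported on $\Pi$, feasibility reads $\left\| y-A_{\Pi}x^{p}_{\Pi}\right\|_{2}\leqslant\epsilon$ and $B_{\Pi}x^{p}_{\Pi}\leqslant b$. Dividing the first by $\left\| x^{p}\right\|_{2}$ gives $\left\| y/\left\| x^{p}\right\|_{2}-A_{\Pi}z^{p}_{\Pi}\right\|_{2}\leqslant\epsilon/\left\| x^{p}\right\|_{2}$; letting $p\to\infty$, the right-hand side and the term $y/\left\| x^{p}\right\|_{2}$ both tend to $0$, so $A_{\Pi}\eta=0$. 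Dividing the second by $\left\| x^{p}\right\|_{2}$ gives $B_{\Pi}z^{p}_{\Pi}\leqslant b/\left\| x^{p}\right\|_{2}$, and since a non-strict inequality is preserved under limits, letting $p\to\infty$ yields $B_{\Pi}\eta\leqslant 0$. Thus $\eta$ is a nonzero vector in $\lbrace\eta:A_{\Pi}\eta=0\rbrace\cap\lbrace\eta:B_{\Pi}\eta\leqslant 0\rbrace$, contradicting \eqref{eq4 lower bound}. The main obstacle is precisely this limiting step: one must divide by the diverging quantity $\left\| x^{p}\right\|_{2}$ so that the inhomogeneous data $y,b,\epsilon$ wash out, converting the two feasibility constraints into exactly the homogeneous system that defines the cone in $(E1)$, while the pigeonhole reduction to a fixed support $\Pi$ is what makes the matrices $A_{\Pi}$ and $B_{\Pi}$ well defined uniformly along the whole subsequence.
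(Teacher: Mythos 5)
Your proposal is correct and follows essentially the same route as the paper: reduce $(E2)$ and $(E3)$ to $(E1)$ via full column rank of every $A_{\Pi}$, then assume unboundedness, normalize a diverging sequence of solutions so that $y$, $b$, $\epsilon$ wash out, and extract a nonzero limit in $\lbrace \eta: A_{\Pi}\eta=0\rbrace\cap\lbrace \eta: B_{\Pi}\eta\leqslant 0\rbrace$, contradicting \eqref{eq4 lower bound}. Your version is in fact slightly tidier than the paper's, since you fix a common support of size exactly $k$ by pigeonhole and invoke Bolzano--Weierstrass explicitly to justify the limit, whereas the paper normalizes only over the blowing-up coordinates $S_{1}$ and treats the normalized vector as a fixed unit vector without passing to a convergent subsequence.
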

 
\begin{proof}
 First of all,  we suppose that the  set $\Lambda$ is unbounded. There exists a  sequence of the sparsest solutions of \eqref{Pnew}, denoted by  $\lbrace x^{p}\rbrace$, satisfying the following properties: $$\left\| x^{p}\right\|_{\infty}\rightarrow \infty~~\mathrm{as}~~p\rightarrow \infty$$ and there  is a fixed index set $S_{1}$ ($\vert S_{1}\vert \leq k$) such that   
\begin{equation*}\label{eq7 lower bound}
 \vert x_{i}^{p} \vert \rightarrow \infty~\mathrm{for}~\mathrm{all} ~i\in S_{1}, ~ \mathrm{as}~ p\rightarrow \infty 
\end{equation*} 
and the remaining components $x_{i}^{p},~ i\in S_{2}=\mathrm{supp}(x^{p})\setminus  S_{1}$ are bounded. 
Based on the fact that $x^{p}$ satisfies the  constraints in  \eqref{Pnew},  we have 
$$\norm{ A_{S_{2}}x^{p}_{S_{2}}+A_{S_{1}}x^{p}_{S_{1}}-y}_{2}\leqslant \epsilon, ~ B_{S_{2}}x^{p}_{S_{2}}+B_{S_{1}}x^{p}_{S_{1}}\leqslant b.$$
We divide  the above two inequalities by $\left\| x^{p}_{S_{1}}\right\|_{2}$ to obtain
$$\frac{\left\| A_{S_{2}}x^{p}_{S_{2}}+A_{S_{1}}x^{p}_{S_{1}}-y\right\|_{2}}{\left\| x^{p}_{S_{1}}\right\|_{2}}\leqslant \frac{\epsilon}{\left\| x^{p}_{S_{1}}\right\|_{2}}, ~ ~\frac{B_{S_{2}}x^{p}_{S_{2}}+B_{S_{1}}x^{p}_{S_{1}}}{\left\| x^{p}_{S_{1}}\right\|_{2}}\leqslant \frac{b}{\left\| x^{p}_{S_{1}}\right\|_{2}}.$$
Then we have
$$\norm{ A_{S_{2}}\frac{x^{p}_{S_{2}}}{\left\| x^{p}_{S_{1}}\right\|_{2}}+A_{S_{1}}\bar{\eta}-\frac{y}{\left\| x^{p}_{S_{1}}\right\|_{2}}}_{2}\leqslant \frac{\epsilon}{\left\| x^{p}_{S_{1}}\right\|_{2}}, ~~ B_{S_{2}}\frac{x^{p}_{S_{2}}}{\left\| x^{p}_{S_{1}}\right\|_{2}}+B_{S_{1}}\bar{\eta}\leqslant \frac{b}{\left\| x^{p}_{S_{1}}\right\|_{2}},$$ where $\bar{\eta}$ is a unit vector in $R^{\vert S_{1}\vert}$. Note that 
$$\lim_{p\rightarrow \infty} \frac{x^{p}_{S_{2}}}{\norm{x^{p}_{S_{1}}}_{2}}=0, ~ \lim_{p\rightarrow \infty}\frac{y}{\norm{x^{p}_{S_{1}}}_{2}}=0, ~ \lim_{p\rightarrow \infty}\frac{b}{\norm{x^{p}_{S_{1}}}_{2}}=0, ~ \lim_{p\rightarrow \infty}\frac{\epsilon}{\norm{x^{p}_{S_{1}}}_{2}}=0.$$ 
Thus there exists a unit vector 
$\bar{\eta}\in R^{\vert S_{1} \vert}$ satisfying 
\begin{equation*}
A_{S_{1}}\bar{\eta}=0, ~B_{S_{1}}\bar{\eta}\leqslant 0.
\end{equation*} 
This means 
$$\biggr\lbrace \eta: A_{S_{1}}\eta=0 \biggr\rbrace \cap \biggl\lbrace \eta: B_{S_{1}}\eta\leqslant 0\biggr\rbrace\neq \lbrace 0 \rbrace.$$
which contradicts to the assumption  \eqref{eq4 lower bound}.  Thus  under \eqref{eq4 lower bound},  $\Lambda$ is bounded.   It is clear that if any $k$ columns of $A$ are linearly independent or $k< spark(A)$, then the set $\lbrace \eta: A_{\Pi}\eta=0 \rbrace=\lbrace 0\rbrace$ and thus  \eqref{eq4 lower bound} holds. Hence the second and third conditions in Lemma \ref{lower bound} can also ensure $\Lambda$ to be bounded.
\end{proof}

\section{Conclusion}
In this paper,   some basic properties of the solutions of \eqref{Pnew} are developed such as the necessary conditions for a point being the sparsest point in the feasible set of \eqref{Pnew}.   
Some sufficient conditions   for  the nonuniqueness of the sparsest solutions of \eqref{Pnew} are also developed. 
We also discussed  the boundedness of the solution set of \eqref{Pnew} under  certain conditions. Based on this,  a  positive lower bound for the absolute nonzero  entries of the solutions to \eqref{Pnew} can be guaranteed  when the solution set of \eqref{Pnew} is bounded. 
These results can be applied to a class of $\ell_{0}$-problems such as the standard $\ell_{0}$-minimization problems (C1) and  (C2), and even some structured sparsity models.

\end{document}